\newcommand{\Z}{{\mathbb Z}}
\newcommand{\R}{{\mathbb R}}
\newcommand{\F}{{\mathbb F}}
\newcommand{\uH}{{\mathbb H}}
\newtheorem{thm}{Theorem}
\newtheorem{lem}{Lemma}
\newtheorem{rmk}{Remark}
\newtheorem{defn}{Definition}
\newcommand{\thmref}[1]{Theorem~\ref{#1}}
\newcommand{\lemref}[1]{Lemma~\ref{#1}}
\newcommand{\rmkref}[1]{Remark~\ref{#1}}
\begin{document}

\title{On interlacing of zeros of certain family of modular forms}

\author{Ekata Saha and N. Saradha}

\address{Ekata Saha and N. Saradha\\ \newline
School of Mathematics, Tata Institute of Fundamental Research,
Homi Bhabha Road, Navy Nagar, Mumbai, 400 005, India}
\email{ekata@math.tifr.res.in}
\email{saradha@math.tifr.res.in}

\subjclass[2010]{11F11, 11F03}

\date{\today}

\keywords{modular forms, location of zeros, interlacing of zeros}

\begin{abstract}
Let $k=12 m(k)+s \ge 12$ for $s\in \{0,4,6,8,10,14\}$, be an even integer
and $f$ be a normalised modular form of weight $k$ with real Fourier
coefficients, written as
$$
f=E_k+\sum_{j=1}^{m(k)}a_jE_{k-12j}\Delta^j.
$$
Under suitable conditions on $a_j$
(rectifying an earlier result of Getz), we show that all the zeros
of $f$, in the standard fundamental domain for the action of ${\bf SL}(2,\Z)$
on the upper half plane, lies on the arc 
$A:= \left\{ e^{i \theta} : \frac{\pi}{2} \le \theta \le \frac{2\pi}{3} \right\}$.
Further, extending a result of Nozaki, we show that for certain family
$\{f_k\}_k$ of normalised modular forms, the zeros of $f_k$ and $f_{k+12}$
interlace on
$A^\circ:= \left\{ e^{i \theta} : \frac{\pi}{2} < \theta < \frac{2\pi}{3} \right\}$.
\end{abstract}

\maketitle

\section{\bf Introduction}

Let $\uH$ denote the complex upper half plane. Then the full modular
group ${\bf SL}(2,\Z)$ acts on $\uH$ by the transformation law
$$
z\mapsto \frac {az+b}{cz+d}
$$
for $\left( {a \atop c} {b \atop d} \right) \in {\bf SL}(2,\Z)$.
The standard fundamental domain for this action
of ${\bf SL}(2,\Z)$ on $\uH$ is the following subset of $\uH$,
$$
\F:= \left\{ |z| \ge 1, -\frac{1}{2} \le \Re(z) \le 0 \right\}
\cup \left\{ |z| > 1, 0 < \Re(z) < \frac{1}{2} \right\}.
$$
Throughout this article we take $k\ge 4$ to be an even integer.
For $z \in \uH$, the Eisenstein
series of weight $k$ for the full modular group ${\bf SL}(2,\Z)$
is defined by the following absolutely convergent series,
$$
E_k(z):=\frac{1}{2}\sum_{c,d\in\Z \atop (c,d)=1} \frac{1}{(cz+d)^k}.
$$
The Eisenstein series of weight $0$ is defined by $E_0:=1$. 
The Eisenstein series $E_k$ is a modular form of weight $k$ for
${\bf SL}(2,\Z)$. It is classical that the space of modular forms of
weight $k$ is generated by the Eisenstein series $E_k$ and cusp forms
of weight $k$. We write $k=12 m(k) +s$
with $s\in \{0,4,6,8,10,14\}$. We will use this notation for $k$ throughout
the article without any further mention. The unique normalised cusp form of
weight $12$, denoted by $\Delta$, is defined as follows:
$$
\Delta := \frac{E_4^3 \ - \ E_6^2}{1728}.
$$

Rankin and Swinnerton-Dyer \cite{RSD} proved that
for $k \ge 4$, all the zeros of the Eisenstein series
$E_k$ lie in the arc
$$
A:=\left\{ |z| = 1, -\frac{1}{2} \le \Re(z) \le 0 \right\}
= \left\{ e^{i \theta} : \frac{\pi}{2} \le \theta \le \frac{2\pi}{3} \right\}.
$$
In 2004, extending the arguments of Rankin and Swinnerton-Dyer,
Getz \cite{JG} gave a criterion for a normalised modular form of weight $k$
for ${\bf SL}(2,\Z)$, written as $f=E_k+\sum_{j=1}^{m(k)}a_j E_{k-12j} \Delta^j$, 
to have all its zeros on the arc $A$, in terms of $a_j$'s.
However, there is a rectifiable error in his proof. While estimating
$H(\theta)$ in \cite[p. 2225, eq. (2.5)]{JG}, he used
an upper bound for $R_{k-12j}, 1\le j\le m(k)$ from \cite[p. 2224, eq. (2.3)]{JG}
which is valid if $k-12j \ge 12$. But $k-12m(k)$ is always
less than $12$, unless it is $14$. We present below a corrected
version of his theorem.

Let us define
\begin{equation}\label{delta}
\delta_t=
\begin{cases}
-2 & \text{if } t=0,\\
1.009 & \text{if } t=4,\\
0.304 & \text{if } t=6,\\
0.122 & \text{if } t=8,\\
0.051 & \text{if } t=10,\\
0.022  &\text{if } t\ge 12.
\end{cases}  
\end{equation}

\begin{thm}\label{Zeros}
Let $k \ge 12$ and $f$ be a normalised modular form
of weight $k$, written as
$$
f=E_k+\sum_{j=1}^{m(k)}a_jE_{k-12j}\Delta^j, 
$$
with $a_j\in\R$ for $1\le j\le m(k)$.
Let $\epsilon := \sup_{z \in A} |\Delta(z)|$. Suppose that
\begin{equation}\label{cond1}
({3+\delta_{12}})\sum_{j=1}^{m(k)-1} |a_j|\epsilon^j 
+ (3+\delta_s)|a_{m(k)}| \epsilon^{m(k)}
\le {1-\delta_{12}}.
\end{equation}
Then $f$ has $m(k)$ zeros (other than possible zeros at $i,\rho:=e^{2\pi i/3}$)
in the fundamental domain $\F$ and they all lie on the arc $A$.
\end{thm}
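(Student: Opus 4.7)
I would extend the Rankin--Swinnerton-Dyer argument for $E_k$ to the linear combination $f$, paying special attention to the tail term $a_{m(k)}E_s\Delta^{m(k)}$ in which $s=k-12m(k)<12$ (except when $s=14$); this is precisely the spot at which Getz's proof breaks down.

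First, I would parametrize the arc $A$ by $\theta\in[\pi/2,2\pi/3]$ and set
\[
F(\theta):=e^{ik\theta/2}f(e^{i\theta})=e^{ik\theta/2}E_k(e^{i\theta})+\sum_{j=1}^{m(k)}a_j\bigl[e^{i(k-12j)\theta/2}E_{k-12j}(e^{i\theta})\bigr]\bigl[e^{6i\theta}\Delta(e^{i\theta})\bigr]^j.
\]
Because $\overline{e^{i\theta}}=-1/e^{i\theta}$, each bracketed factor --- and hence $F(\theta)$ itself --- is real valued (via the weight-$\ell$ modular transformation law combined with the reality of the Fourier coefficients of $E_\ell$ and $\Delta$), and $|e^{6i\theta}\Delta(e^{i\theta})|\le \epsilon$. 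Substituting the Rankin--Swinnerton-Dyer expansion $e^{i\ell\theta/2}E_\ell(e^{i\theta})=2\cos(\ell\theta/2)+R_\ell(\theta)$ with $|R_\ell|\le\delta_\ell$ --- invoking the uniform constant $\delta_{12}$ for the intermediate indices $1\le j\le m(k)-1$ (for which $k-12j\ge 12$) and the possibly larger $\delta_s$ for the tail $j=m(k)$, and handling the degenerate case $s=0$ by treating $E_0=1$ directly (which is the bookkeeping role of $\delta_0=-2$, making $3+\delta_0=1$) --- the hypothesis \eqref{cond1} translates into
\[
F(\theta)=2\cos(k\theta/2)+G(\theta),\qquad |G(\theta)|<2\ \text{for all}\ \theta\in[\pi/2,2\pi/3].
\]

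Second, a direct count (case by case in $s$) shows that $k\theta/2$ traverses exactly $m(k)+1$ integer multiples of $\pi$ as $\theta$ ranges over $[\pi/2,2\pi/3]$, so $2\cos(k\theta/2)$ takes the alternating values $\pm 2$ at $m(k)+1$ points. Since $|G|<2$, $F$ inherits the sign of $2\cos(k\theta/2)$ at each, and the intermediate value theorem produces $m(k)$ real zeros of $F$, one strictly between each consecutive pair of extrema; these correspond to $m(k)$ zeros of $f$ lying in $A^\circ$. Finally, the valence formula
\[
\frac{k}{12}=v_\infty(f)+\tfrac{1}{2}v_i(f)+\tfrac{1}{3}v_\rho(f)+\sum_{P\in\F\setminus\{i,\rho\}}v_P(f),
\]
together with $v_\infty(f)=0$ (the constant term of $f$ being $1$), shows that these $m(k)$ zeros, supplemented only by the forced vanishings at $i$ and $\rho$ dictated by $s\bmod 4$ and $s\bmod 6$, exhaust every zero of $f$ in $\F$.

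The genuine obstacle is securing the bound $|G(\theta)|<2$ in the first step. The triangle inequality must be carefully arranged so as to separate (i)~the generic range $k-12j\ge 12$, on which the uniform $\delta_{12}$ controls every $R_{k-12j}$; (ii)~the tail $j=m(k)$ with $k-12j=s$, which must be charged exactly once with the possibly larger $\delta_s$; and (iii)~the degenerate sub-case $s=0$, where the Eisenstein factor is just the constant $1$ and there is no cosine term to subtract --- these being exactly the three places where the proof in \cite{JG} conflates $\delta_{k-12j}$ with $\delta_{12}$, and whose correct handling motivates the precise shape of \eqref{cond1}.
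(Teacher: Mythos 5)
Your proposal follows essentially the same route as the paper: pass to the real-valued function $e^{ik\theta/2}f(e^{i\theta})$ on $[\pi/2,2\pi/3]$, approximate it by $2\cos(k\theta/2)$ with error of modulus less than $2$ using the corrected Eisenstein bounds ($3+\delta_{12}$ for the factors with $k-12j\ge 12$ and $3+\delta_s$ for the tail $j=m(k)$), produce $m(k)$ sign changes between the $m(k)+1$ extrema of the cosine, and conclude with the valence formula. The only bookkeeping discrepancy is that in the paper the error $F_\ell(\theta)-2\cos(\ell\theta/2)$ is bounded by $1+\delta_\ell$ rather than $\delta_\ell$ (the extra $1$ absorbing the term $(2\cos(\theta/2))^{-\ell}\le 1$), which is exactly why the right-hand side of \eqref{cond1} is $1-\delta_{12}$; with that adjustment your triangle-inequality arrangement matches the paper's.
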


\begin{rmk}\rm
Note that $\delta_{12}$ above is smaller than the $\delta$ of \cite[Theorem 1]{JG}.
This better value is due to a more accurate estimation
of a finite sum using computation. See \S2, \lemref{Eisenstein}.
Getz \cite{JG} computed $\epsilon \sim 0.004809\ldots$.
\end{rmk}

Apart from this kind of normalised modular forms there are other examples
of families of modular forms, which have been shown to have their zeros
on the arc $A$ (see \cite{RAR,AKN,DJ}). Now for the zeros of these families
of modular forms, one interesting question is to ask about their possible
{\it interlacing} property. 

\begin{defn}
Let $\alpha<\beta$. Suppose that $f,g$ are two complex valued functions
with simple zeros in the open interval $(\alpha,\beta)$. Let 
$\alpha<t_1<\cdots<t_m<\beta$ and $\alpha<t_1^*<\cdots<t_{m+1}^*<\beta$
be the zeros of $f$ and $g$, respectively. We say that zeros of $f$ and $g$ 
interlace in $(\alpha,\beta)$ if 
$$
t_j^*<t_j<t_{j+1}^* \ \ \text{for} \ \ 1\le j\le m.
$$
\end{defn}

For example, the zeros of $\cos(n \theta)$ and $\cos((n+1)\theta)$
interlace in $(0,\pi)$ for an integer $n \ge 1$.
Rankin and Swinnerton-Dyer \cite{RSD} proved that the
Eisenstein series $E_k$ has $m(k)$ simple zeros in the open arc
$A^\circ:= \left\{ e^{i \theta} : \frac{\pi}{2} < \theta < \frac{2\pi}{3} \right\}.$
For this, they considered the function
\begin{equation}\label{F_k-defn}
F_k(\theta) := e^{ik\theta/2}E_k(e^{i\theta}).
\end{equation}
This is a real valued function for $\theta$ real and it has $m(k)$
zeros in the interval $(\pi/2,2\pi/3)$ and so does $E_k(e^{i\theta})$
in $A^\circ$. Note that $m(k+12)=m(k)+1$. Hence one may look
for the interlacing property for the zeros of $F_k(\theta)$ and
$F_{k+12}(\theta)$ for $\theta \in (\pi/2,2\pi/3)$. In this instance,
we say that the zeros of $E_k(e^{i\theta})$ and
$E_{k+12}(e^{i\theta})$ interlace in $A^\circ$.
This was predicted by Gekeler \cite{EUG} and proved by Nozaki~\cite{HN}.

For the zeros of certain families of weakly holomorphic modular forms
considered by Asai, Kaneko and Ninomiya \cite{AKN}, their interlacing property was
established by Jermann
\cite{JJ}. Similar properties for the zeros of the weakly holomorphic modular forms,
studied by Duke and Jenkins \cite{DJ}, were proved by Jenkins and Pratt \cite{JP}.
Here we establish the interlacing
property of the zeros of certain family of normalised modular forms
that were considered in \thmref{Zeros}.

\begin{thm}\label{interlacing}
For each $k \ge 12$, let
$(a_j^{(k)})_{1\le j \le m(k)}$ be real numbers such that
\begin{equation}\label{cond2}
(3+\delta_{12}) \sum_{j=1}^{m(k)-1} |a_j^{(k)}| \epsilon^j 
+ (3+\delta_s) |a^{(k)}_{m(k)}| \epsilon^{m(k)}
\le 20 \left( \frac{1}{2} \right)^{k/2},
\end{equation}
where $\delta_s$ is as in \eqref{delta}. 
Then for the family of normalised modular forms $(f_k)_k$ for
${\bf SL}(2,\Z)$ defined by
$$
f_k:= E_k+\sum_{j=1}^{m(k)}a_j^{(k)} E_{k-12j}\Delta^j,
$$
the zeros of $f_k$ in the fundamental domain $\F$ lie on the arc $A$.
Further, the zeros of $f_k$ and those of $f_{k+12}$ interlace in $A^\circ$
for each $k \ge 12$.
\end{thm}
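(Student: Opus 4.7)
The plan is to adapt Nozaki's framework for interlacing of Eisenstein zeros (mentioned above) to the perturbed forms $f_k$. My first observation is that hypothesis \eqref{cond2} is strictly stronger than \eqref{cond1} for every $k\ge 12$: indeed $20\cdot 2^{-k/2}\le 20/64<1-\delta_{12}$. Hence \thmref{Zeros} is immediately available and all zeros of $f_k$ in $\F$ already lie on the arc $A$. It remains only to establish the interlacing property on $A^\circ$. To do this I would introduce the real-valued function
\[
\Phi_k(\theta):=e^{ik\theta/2}f_k(e^{i\theta}),\qquad \theta\in[\pi/2,2\pi/3],
\]
generalising the function $F_k$ of \eqref{F_k-defn}, and decompose it as
\[
\Phi_k(\theta)=2\cos(k\theta/2)+R_k(\theta)+T_k(\theta),
\]
where $R_k(\theta):=F_k(\theta)-2\cos(k\theta/2)$ is the Rankin--Swinnerton-Dyer remainder for the pure Eisenstein term and
\[
T_k(\theta):=\sum_{j=1}^{m(k)}a_j^{(k)}\,e^{ik\theta/2}\,E_{k-12j}(e^{i\theta})\,\Delta(e^{i\theta})^j
\]
packages the cusp-form contribution.

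The second step is to exploit the exponentially small right-hand side of \eqref{cond2}. Combining $|\Delta(e^{i\theta})|\le\epsilon$ on $A$ with the uniform bound $|e^{ik\theta/2}E_{k-12j}(e^{i\theta})|\le 3+\delta_{k-12j}$ that already underlies the proof of \thmref{Zeros}, I obtain $|T_k(\theta)|\le 20\cdot 2^{-k/2}$ uniformly for $\theta\in[\pi/2,2\pi/3]$. Thus $T_k$ is negligible on an exponential scale. The Eisenstein remainder $R_k$ does not enjoy such strong decay, but, following Nozaki, admits a finer decomposition that separates the contributions of the index pairs $(c,d)$ with $c^2+d^2\le 2$ from the rest, the latter decaying geometrically in $k$ on the closed arc.

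The crux of the argument is to evaluate $\Phi_k$ at each zero of $\Phi_{k+12}$ in $A^\circ$ and to show that the resulting values alternate in sign. Let $\pi/2<\theta^*_1<\dots<\theta^*_{m(k)+1}<2\pi/3$ be these zeros (their existence and simplicity follow from the same sign-counting argument applied to $f_{k+12}$, since $m(k+12)=m(k)+1$). Using the identity $k\theta^*/2=(k+12)\theta^*/2-6\theta^*$ together with $\Phi_{k+12}(\theta^*)=0$, one writes
\[
\Phi_k(\theta^*)=2(-1)^{j+1}\sin(6\theta^*)+\mathrm{error},
\]
where $j$ is the integer such that $(2j+1)\pi/(k+12)$ is the nearest cosine-zero to $\theta^*$, and the error collects contributions from $R_k(\theta^*)$, $T_k(\theta^*)$, and the small deviation of $(k+12)\theta^*/2$ from $(2j+1)\pi/2$. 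On $A^\circ$ the factor $\sin(6\theta)$ is strictly negative and $(-1)^{j+1}$ alternates with $j$, so the leading term alternates in sign as $\ell$ increases; provided the error is strictly dominated by $2|\sin(6\theta^*)|$, the intermediate value theorem furnishes a zero of $\Phi_k$ on each interval $(\theta^*_\ell,\theta^*_{\ell+1})$ and the interlacing follows.

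The main obstacle is pushing this sign comparison through at the extremal zeros $\theta^*_1$ and $\theta^*_{m(k)+1}$, which sit within $O(1/k)$ of $\pi/2$ or $2\pi/3$, so that $|\sin(6\theta^*)|$ can be as small as $O(1/k)$. At these points the two error contributions must be controlled very finely: the exponentially small bound $|T_k|\le 20\cdot 2^{-k/2}$ coming from \eqref{cond2} takes care of the cusp-form piece and explains why the hypothesis in \thmref{interlacing} must be qualitatively stronger than the one in \thmref{Zeros}, while the Nozaki-type fine analysis of $R_k$ handles the Eisenstein piece. The detailed calibration, in particular the factor $20$ on the right-hand side of \eqref{cond2}, is chosen precisely so that the total error remains strictly less than $2|\sin(6\theta^*)|$ throughout $A^\circ$.
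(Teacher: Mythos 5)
Your reduction of the first assertion to Theorem~\ref{Zeros} is correct ($20\cdot 2^{-k/2}\le 20/64<1-\delta_{12}$), your function $\Phi_k$ is exactly the paper's $G_k$ from \eqref{G_k-defn}, and your uniform bound $|T_k(\theta)|\le 20\cdot 2^{-k/2}$ is precisely how the paper controls the cusp-form contribution inside $Q_k$ in \eqref{our-bound}. The difficulty is in your third step. Writing $\cos(k\theta^*/2)=\cos((k+12)\theta^*/2)\cos(6\theta^*)+\sin((k+12)\theta^*/2)\sin(6\theta^*)$ at a zero $\theta^*$ of $\Phi_{k+12}$ only yields a clean leading term $2(-1)^{j+1}\sin(6\theta^*)$ if $\cos((k+12)\theta^*/2)=-S_{k+12}(\theta^*)/2$ is negligible there. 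Near $\theta=2\pi/3$ it is not: $2\cos(\theta/2)\to 1$, so the summand $(2\cos(\theta/2))^{-(k+12)}$ in $S_{k+12}$ is of \emph{constant} order (for $\theta^*=2\pi/3-c/k$ it is roughly $e^{-c\sqrt{3}/2}$), while your proposed main term $2|\sin(6\theta^*)|$ is only $O(1/k)$ since $\sin(6\theta)$ vanishes at $2\pi/3$. No calibration of the constant in \eqref{cond2} can rescue this, because that hypothesis controls only $T_k$, whereas the obstruction comes from the Eisenstein remainder $R_k$, which is independent of the $a_j^{(k)}$. So the claim that ``the total error remains strictly less than $2|\sin(6\theta^*)|$ throughout $A^\circ$'' is false near the $\rho$-end of the arc, and the intermediate-value argument breaks down exactly for the last few zeros.

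This is why the paper does not argue by sign alternation of $G_k$ at the zeros of $G_{k+12}$ globally. Instead it localises each zero $\alpha_j^*$ of $G_k$ near the corresponding zero $\alpha_j$ of $\cos(k\theta/2)$ (Lemma~\ref{l1}), which suffices on $(\pi/2,23\pi/36]$ where the perturbation is genuinely small, and then treats the range $[19\pi/32,2\pi/3)$ by a separate case analysis: there it uses the positivity $0<S_k(\theta)<1$ from \eqref{S_k-bound}, the monotone comparison $S_k(\theta)>S_{k+12}(\theta)$ from \eqref{S_k-ineq}, and the increasing/decreasing behaviour of the two cosines at their zeros to order $\beta_j^*,\alpha_j^*,\beta_{j+1}^*$ directly. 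Your sketch needs an analogous dedicated argument on a neighbourhood of $2\pi/3$; without it the proof is incomplete.
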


\begin{rmk}\rm
By \eqref{cond2}, we see that \eqref{cond1} is satisfied. Hence by \thmref{Zeros},
all the zeros of $(f_k)_k$ lie on the arc $A$, thus giving the first assertion of
\thmref{interlacing}.
\end{rmk}

\begin{rmk}\rm
Nozaki's result is a special case of \thmref{interlacing} when $a_j^{(k)}=0$
for all $1\le j \le m(k)$.
\end{rmk}

For proving the interlacing of the zeros of the Eisenstein series $E_k(z)$,
Nozaki showed that $F_k(\theta)$ as defined in \eqref{F_k-defn}, 
is very well approximated by $2 \cos(k\theta/2)$ for $\theta \in (\pi/2,2\pi/3)$.
This is an important step in his method. We are able to show that
\begin{equation}\label{G_k-defn}
G_k(\theta) := e^{ik\theta/2} f_k(e^{i\theta})
\end{equation}
is also well approximated by $2 \cos(k\theta/2)$ for $\theta \in (\pi/2,2\pi/3)$.
See \S 3.1 for details. Both these functions have $m(k)$ zeros in $(\pi/2,2\pi/3)$.
If $\alpha$ is a zero of $\cos(k\theta/2)$ for $\theta \in (\pi/2,2\pi/3)$, then there
is a neighbourhood of $\alpha$, say $(\alpha-\epsilon,\alpha+\epsilon)$,
containing exactly one zero $\alpha^*$ of $G_k(\theta)$. It can be easily seen that,
the zeros of $\cos(k \theta/2)$ and $\cos((k+12)\theta/2)$
interlace in $(\pi/2,2\pi/3)$ (see \S 3.2). Thus there exist successive zeros
$\beta,\gamma$ of $\cos((k+12)\theta/2)$ with
$$
\beta< \alpha < \gamma.
$$
Again, 
there exist intervals of the form
$(\beta-\delta,\beta+\delta)$ and $(\gamma-\mu,\gamma+\mu)$,
each containing exactly one zero of $\cos((k+12)\theta/2)$,
say $\beta^*,\gamma^*$ respectively. Thus if
$$
\beta+\delta < \alpha-\epsilon < \alpha+\epsilon < \gamma-\mu,
$$
then we obtain that 
$$
\beta^*< \alpha^* < \gamma^*.
$$
This argument is used to show that
the zeros of $G_k (\theta)$ and $G_{k+12}(\theta)$
interlace in $(\pi/2,23\pi/36)$ (see \S 3.2, 4.1). 
This method does not work as we approach $2\pi/3$. For proving
the interlacing property in the remaining interval we consider
the interval $(19\pi/32,2\pi/3)$, which overlaps with $(\pi/2,23\pi/36)$.
Here the method depends on analysing different cases
according to the increasing or decreasing property
of the cosine function at their respective zeros (see \S 4.2).

\section{\bf A lemma and Proof of \thmref{Zeros}}

We begin this section with the following lemma. This will be used in the proof
of both the Theorems \ref{Zeros} and \ref{interlacing}.

\begin{lem}\label{Eisenstein}
Let $k\ge 4$ and
$F_k(\theta):= e^{ik\theta/2}E_k(e^{i\theta})$ for $\theta\in [\pi/2,2\pi/3]$.
Then
$$
F_k(\theta)=2\cos(k\theta/2)+\left(\frac{1}{2\cos(\theta/2)}\right)^k
+ \left(\frac{1}{2i\sin(\theta/2)}\right)^k + P_k(\theta), 
$$
where 
$$
|P_k(\theta)|<
\begin{cases}
0.759 & \text{if } k=4,\\
0.179 & \text{if } k=6,\\
0.059 & \text{if } k=8,\\
0.019 & \text{if } k=10,\\
0.359(\frac{1}{2})^{k/2}  & \text{if } k\ge 12.
\end{cases}  
$$
\end{lem}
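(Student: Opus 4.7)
The starting point is the absolutely convergent defining series
$$
E_k(e^{i\theta}) = \frac{1}{2}\sum_{(c,d)=1}(ce^{i\theta}+d)^{-k}.
$$
My plan is to separate the contribution of the eight coprime pairs with $\max(|c|,|d|)\le 1$, namely $(0,\pm 1),(\pm 1,0),(\pm 1,\pm 1),(\pm 1,\mp 1)$, and to bound the rest uniformly. Using $k$ even together with the identities $e^{i\theta}+1 = 2\cos(\theta/2)\,e^{i\theta/2}$ and $e^{i\theta}-1 = 2i\sin(\theta/2)\,e^{i\theta/2}$, a direct computation shows that after multiplying by $e^{ik\theta/2}$ the pairs $(0,\pm 1)$ give $e^{ik\theta/2}$, the pairs $(\pm 1,0)$ give $e^{-ik\theta/2}$, the pairs $(\pm 1,\pm 1)$ give $(2\cos(\theta/2))^{-k}$ and the pairs $(\pm 1,\mp 1)$ give $(2i\sin(\theta/2))^{-k}$. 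Together they account for exactly the three explicit main terms of the lemma, and the remainder is
$$
P_k(\theta) = \frac{e^{ik\theta/2}}{2}\sum_{\substack{(c,d)=1 \\ \max(|c|,|d|)\ge 2}}(ce^{i\theta}+d)^{-k}.
$$

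To estimate $|P_k(\theta)|$, I would use the classical inequality
$$
|ce^{i\theta}+d|^2 = c^2+d^2+2cd\cos\theta \ge c^2+d^2-|cd|,
$$
valid on $[\pi/2,2\pi/3]$ because $\cos\theta\in[-1/2,0]$ (split by the sign of $cd$). The innermost shell consists of the eight pairs with $\{|c|,|d|\}=\{1,2\}$, each satisfying $c^2+d^2-|cd|=3$ and together contributing $4\cdot 3^{-k/2} = 4(2/3)^{k/2}(1/2)^{k/2}$. Since $(2/3)^6\approx 0.0878$, at $k=12$ the leading shell already amounts to $\approx 0.351\,(1/2)^{k/2}$, which is essentially the constant $0.359$ in the lemma. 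All remaining pairs satisfy $c^2+d^2-|cd|\ge (3/4)\max(|c|,|d|)^2$, so one can group by the value $n=\max(|c|,|d|)\ge 3$ (at most $8n$ pairs per shell) and obtain a tail dominated by $\sum_{n\ge 3} 8n\,((3/4)n^2)^{-k/2}$, which decays geometrically in $k$ and is easily checked to be negligible compared with the leading shell when $k\ge 12$, giving the uniform bound $0.359(1/2)^{k/2}$.

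For the small weights $k=4,6,8,10$, the tail decays too slowly for such a clean uniform bound; instead one directly computes the finite sum $\sum_{\max(|c|,|d|)\le N}(c^2+d^2-|cd|)^{-k/2}$ for a suitable cutoff $N$ and adds the closed-form tail estimate, yielding the specific constants $0.759,0.179,0.059,0.019$. As noted in the remark following \thmref{Zeros}, the sharper constants compared with Getz come precisely from this accurate numerical evaluation of the initial finite sum rather than a crude analytic bound. The main technical obstacle is calibrating $N$: it must be large enough that the closed-form tail estimate loses only a small fraction of the true value, yet small enough that the finite sum can be computed explicitly to the precision required for the constants above to hold. Once this numerical step is carried out and combined with the uniform lower bound for $|ce^{i\theta}+d|^2$, the five stated inequalities follow at once.
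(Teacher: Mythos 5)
Your proposal follows essentially the same route as the paper: the same eight coprime pairs produce the three main terms, the same inequality $|ce^{i\theta}+d|^2=c^2+d^2+2cd\cos\theta\ge c^2+d^2-|cd|$ controls the remainder, and the constants come from an explicitly computed initial finite sum plus a closed-form tail bound (the paper organises the remainder by $N=c^2+d^2$ and bounds the tail using $\#\{(c,d):c^2+d^2=N\}\le 2(2\sqrt{N}+1)$ together with an integral comparison, rather than by $\max(|c|,|d|)$-shells, but this is cosmetic). One small caveat: for $k=12$ your crude shell estimate gives roughly $(0.351+0.009)(1/2)^{k/2}\approx 0.360\,(1/2)^{k/2}$, just above the stated $0.359$, so even in the range $k\ge 12$ the ``negligible tail'' shortcut must be replaced by the slightly sharper finite computation (this is exactly what the paper's Table~1 with $A=20$ does).
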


\begin{rmk}\label{bound}\rm
From \lemref{Eisenstein} we obtain that
$$
\sup_{\theta \in  [\pi/2,2\pi/3]} |F_k(\theta)|=\sup_{z \in A} |E_k(z)|<
\begin{cases}
4.009 & \text{if } k=4,\\
3.304 & \text{if } k=6,\\
3.122 & \text{if } k=8,\\
3.051 & \text{if } k=10,\\
3 + 1.359 (\frac{1}{2})^{k/2}  &\text{if } k\ge 12.
\end{cases}  
$$
In particular, we shall use the following bounds for the proof of
\thmref{Zeros}. For $k \ge 4$,
$$
\sup_{\theta \in  [\pi/2,2\pi/3]} |F_k(\theta)|
\le 3 +  \left|\frac{1}{2i\sin(\theta/2)}\right|^k + |P_k(\theta)|
< 3 + \delta_k,
$$
where $\delta_k$ is as in \eqref{delta}.
\end{rmk}

\subsection{Proof of \lemref{Eisenstein}}
For $N \ge 1$, let us define
$$
\sigma_N(\theta):=\frac{1}{2}\sum_{c,d\in\Z \atop {c^2+d^2=N \atop (c,d)=1}} 
\frac{1}{(ce^{i\theta/2}+de^{-i\theta/2})^k}.
$$
In the above definition, whenever empty sum appears, it is
assumed to be $0$. Now 
$$
F_k(\theta) =\frac{1}{2}\sum_{c,d\in\Z \atop (c,d)=1} 
\frac{1}{(ce^{i\theta/2}+de^{-i\theta/2})^k},
$$
for $\theta \in [\pi/2, 2\pi/3]$. 
Since the series defining the Eisenstein series
of weight $k \ge 4$ is absolutely convergent, we can write
$$
F_k(\theta)= \sum_{N \ge 1} \sigma_N(\theta).
$$
One can easily see that
$$
\sigma_1(\theta)=2\cos(k\theta/2)
\ \ \text{and} \ \ 
\sigma_2(\theta)=
\left(\frac{1}{2\cos(\theta/2)}\right)^k
+ \left(\frac{1}{2i\sin(\theta/2)}\right)^k.
$$
Hence
\begin{equation}\label{F_k}
F_k(\theta)=2\cos(k\theta/2)+\left(\frac{1}{2\cos(\theta/2)}\right)^k
+ \left(\frac{1}{2i\sin(\theta/2)}\right)^k + P_k(\theta), 
\end{equation}
where
$$
P_k(\theta):= \sum_{N \ge 5} \sigma_N(\theta).
$$
Now we split the above sum and write
\begin{equation}\label{P_k}
P_k(\theta)= \sum_{5 \le N \le A} \sigma_N(\theta)+ \sum_{N \ge B} \sigma_N(\theta),
\end{equation}
for some integer $A \ge 5$ and $B$, the least integer
larger than $A$ which can be written as sum of squares of
two co-prime integers.

Note that for any two real numbers $c,d$ one has
$|cd| \le \frac{c^2+d^2}{2}$. Since $-1\le 2\cos\theta \le 0$ for 
$\theta\in [\pi/2,2\pi/3]$,
we obtain
\begin{equation}\label{mod}
|ce^{i\theta/2}+de^{-i\theta/2}|^2
=c^2+d^2+2cd\cos \theta \ge c^2+d^2 - |cd| \ge \frac{c^2+d^2}{2}.
\end{equation}
Thus we get
$$
\left| \sum_{5 \le N \le A} \sigma_N(\theta) \right|
\le \frac{1}{2} \sum_{5 \le N \le A} 
\sum_{c,d\in\Z \atop {c^2+d^2=N \atop (c,d)=1}} 
\frac{1}{(c^2 + d^2 - |cd|)^{k/2}}.
$$
For any natural number
$N$, there are at most $2(2N^{1/2}+1)$ pairs $(c,d)$ of integers
such that $c^2+d^2=N$. Hence for the second sum in \eqref{P_k},
using \eqref{mod} we get
\begin{align*}
\left| \sum_{N \ge B} \sigma_N(\theta) \right|
& \le \sum_{N \ge B} \left( \frac{2}{N} \right)^{k/2} (2N^{1/2}+1)\\
& \le \left(2+ \frac{1}{\sqrt{B}} \right) 2^{k/2} \sum_{N \ge B} N^{(1-k)/2} \\
& \le \left(2+ \frac{1}{\sqrt{B}} \right) 2^{k/2}
\int_{B-1}^{\infty} x^{(1-k)/2}dx\\
& = \left(2+ \frac{1}{\sqrt{B}} \right) \frac{2^{(k+2)/2}}{k-3}
\left( \frac{1}{B-1} \right)^{(k-3)/2}.
\end{align*}
So we have
\begin{equation}\label{P_k-bound}
|P_k(\theta)| \le \frac{1}{2} \sum_{5 \le N \le A} 
\sum_{c,d\in\Z \atop {c^2+d^2=N \atop (c,d)=1}} 
\frac{1}{(c^2 + d^2 - |cd|)^{k/2}} +
\left(2+ \frac{1}{\sqrt{B}} \right) \frac{2^{(k+2)/2}}{k-3}
\left( \frac{1}{B-1} \right)^{(k-3)/2}.
\end{equation}
Let us denote the right hand side of \eqref{P_k-bound} by $p_k$.
Using a C++ programming we obtain optimal values of $p_k$ as given
in Table 1. The values of $p_k$ is best possible up to second
decimal place. Our choice of $A$ is also indicated in the table.
\begin{center}
\begin{tabular}{ | l | l | l |}
\hline
$k$ & $A$ & $p_k$ \\ \hline \hline
$4$ & $3 \times 10^6$ & $0.759$  \\ \hline
$6$ & $536$ & $0.179$ \\ \hline
$8$ & $23$ & $0.059$  \\ \hline
$10$ & $14$ & $0.019$\\ \hline
$12$ or more & $20$ & $0.359 \left(\frac{1}{2} \right)^{k/2}$ \\ \hline
\end{tabular}
\\ \medskip Table 1
\end{center}
The computation of the values of $p_k$, when $k \ge 6$ has taken only a few
seconds, whereas for $p_4$, the programme ran for about two hours.
Our proof is now complete. \qed

\subsection{Proof of \thmref{Zeros}}
It is easy to see that $F_k(\theta)$ is real valued for
$\theta \in [\pi/2,2\pi/3]$. Further,
$$
\Delta(e^{i\theta})e^{6i\theta}=\frac{F_4^3(\theta)-F_6^2(\theta)}{1728}\in\R \ \  \text{for}\ \  \theta\in [\pi/2,2\pi/3].
$$
Now, 
\begin{align*}
G(\theta):=e^{ik\theta/2}f(e^{i \theta})
&= e^{ik\theta/2}E_k(e^{i \theta})+
\sum_{j=1}^{m(k)}a_j e^{i(k-12j)\theta/2} E_{k-12j}(e^{i \theta})
\Delta(e^{i \theta})^j e^{6ij\theta}\\
&= F_k(\theta) + \sum_{j=1}^{m(k)}a_j F_{k-12j}(\theta)
\Delta(e^{i \theta})^j e^{6ij\theta}.
\end{align*}
Since $a_j \in \R$ for $1 \le j \le m(k)$,
we see that $G(\theta)$ is real valued for $\theta \in [\pi/2,2\pi/3]$. 
Also by \lemref{Eisenstein} we have,
$$
G(\theta)
= 2\cos(k\theta/2)+\left(\frac{1}{2\cos(\theta/2)}\right)^k
+ \left(\frac{1}{2i\sin(\theta/2)}\right)^k + P_k(\theta)\\
+ \sum_{j=1}^{m(k)}a_j F_{k-12j}(\theta)
\Delta(e^{i \theta})^j e^{6ij\theta}.
$$
We show that
\begin{equation}\label{to-show}
|G(\theta) - 2 \cos(k\theta/2)|< 2.
\end{equation}
By \rmkref{bound},
\begin{align*}
|G(\theta) - 2 \cos(k\theta/2)| 
& < 1 + \delta_{12} + \sum_{j=1}^{m(k)} |a_j| |F_{k-12j}(\theta)|\epsilon^j\\
& \le 1 + \delta_{12} + (3+\delta_{12}) \sum_{j=1}^{m(k)-1} |a_j| \epsilon^j
+ (3+\delta_s) |a_{m(k)}| \epsilon^{m(k)}.
\end{align*}
Thus, $|G(\theta) - 2 \cos(k\theta/2)|< 2$  by our hypothesis.
Now we argue as in \cite{RSD}. By \eqref{to-show}, we get that
between two consecutive extremum points of $\cos(k\theta/2)$,
there is a zero of $G(\theta)$. Now the extremum points of $\cos(k\theta/2)$
for $\theta \in [\pi/2,2\pi/3]$ are given by $\frac{2 \pi n}{k}$,
where $\frac{k}{4}\le n \le \frac{k}{3}$ i.e.
$3m(k)+\frac{s}{4}\le n \le 4m(k)+\frac{s}{3}$. So we have
$m(k)+1$ such $n$'s. Therefore $G(\theta)$ has $m(k)$ zeros on
$(\pi/2,2\pi/3)$. This, together with the valence formula, completes the proof.
\qed

\section{\bf Properties of $G_k(\theta)$}

\subsection{Approximation of $G_k(\theta)$ by $2\cos(k \theta/2)$}
We write
$G_k(\theta)=e^{ik\theta/2}f_k(e^{i\theta})$ as
$$
G_k(\theta) = 2 \cos(k\theta/2) + S_k(\theta),
$$
where by \lemref{Eisenstein},
$$
S_k(\theta) = \left(\frac{1}{2\cos(\theta/2)}\right)^k
+ \left(\frac{1}{2i\sin(\theta/2)}\right)^k + P_k(\theta)
+ \sum_{j=1}^{m(k)}a_j^{(k)} F_{k-12j}(\theta) \Delta(e^{i \theta})^j e^{6ij\theta},
$$
with $|P_k(\theta)|< 0.359 \left(\frac{1}{2}\right)^{k/2}$. Let
\begin{equation}\label{Q_k}
Q_k(\theta) := \left(\frac{1}{2i\sin(\theta/2)}\right)^k + P_k(\theta)
+ \sum_{j=1}^{m(k)}a_j^{(k)} F_{k-12j}(\theta) \Delta(e^{i \theta})^j e^{6ij\theta}.
\end{equation}
Then by \lemref{Eisenstein}, \rmkref{bound} and our hypothesis we get
\begin{equation}\label{our-bound}
|Q_k(\theta)|
\le \left| \left(\frac{1}{2i\sin(\theta/2)}\right)^k\right| + |P_k(\theta)|
+ \sum_{j=1}^{m(k)}|a_j^{(k)}| |F_{k-12j}(\theta)| \epsilon^j
< 21.359 \left(\frac{1}{2}\right)^{k/2}.
\end{equation}
Thus
$$
G_k(\theta) = 2 \cos(k\theta/2)+ S_k(\theta) = 
2 \cos(k\theta/2)+ \left(\frac{1}{2\cos(\theta/2)}\right)^k
+ Q_k(\theta)
$$
with $|Q_k(\theta)| < 21.359 \left(\frac{1}{2}\right)^{k/2}$.
By following the proof of \cite[Lemma 4.1]{HN}, we observe that
\begin{equation}\label{S_k-bound}
0<S_k(\theta)<1, \ \ \text{whenever} \ k\ge 24 \ \text{and} \  \theta\in [19\pi/32,2\pi/3-2\pi/3k].
\end{equation}
Further, we argue as in the proof of \cite[Lemma 4.4]{HN}
to obtain that for $\theta \in [19\pi/32, \alpha_{m(k)}]$, the function
$$
g(\theta):=(2\cos(\theta /2))^{-k}-(2\cos(\theta /2))^{-(k+12)}
$$
is minimised at $\theta=19\pi/32$. Hence for $k \ge 24$
\begin{align*}
S_k(\theta) - S_{k+12}(\theta)
& = g(\theta) + Q_k(\theta) - Q_{k+12}(\theta)\\
& > g\left(\frac{19\pi}{32}\right) - \frac{21.359}{2^{k/2}} - \frac{21.359}{2^{(k+12)/2}}\\
& > \left(2\cos\left(\frac{19\pi}{64}\right)\right)^{-k}
\left(1-\left(2\cos\left(\frac{19\pi}{64}\right)\right)^{-12} \right) - \frac{24.919}{2^{k/2}}\\
& > \frac{0.877}{(1.192)^k} - \frac{24.919}{(1.414)^{k}}>0.
\end{align*}
Thus for $k \ge 24$ and $\theta \in [19\pi/32, \alpha_{m(k)}]$,
\begin{equation}\label{S_k-ineq}
S_k(\theta) > S_{k+12}(\theta).
\end{equation}

\subsection{Zeros of $G_k(\theta)$ and $G_{k+12}(\theta)$}
Let $\alpha_1< \cdots < \alpha_{m(k)}$ and $\beta_1 < \cdots < \beta_{m(k)+1}$
denote the zeros of $\cos(k\theta/2)$ and $\cos((k+12)\theta/2)$ in $(\pi/2,2\pi/3)$,
respectively. Then
\begin{equation}\label{alpha_j}
\alpha_j=
\begin{cases}
(\frac{1}{2} + \frac{2j-1}{k})\pi & \ \text{if} \ k \equiv 0 \bmod 4,\\
(\frac{1}{2} + \frac{2j}{k})\pi & \ \text{if} \ k \equiv 2 \bmod 4,
\end{cases}
\end{equation}
for $1 \le j \le m(k)$ and

\begin{equation}\label{beta_j}
\beta_j=
\begin{cases}
(\frac{1}{2} + \frac{2j-1}{k+12})\pi & \ \text{if} \ k \equiv 0 \bmod 4,\\
(\frac{1}{2} + \frac{2j}{k+12})\pi & \ \text{if} \ k \equiv 2 \bmod 4,
\end{cases}
\end{equation}
for $1 \le j \le m(k)+1$. We observe the following properties: for
$1 \le j \le m(k)$,
\begin{equation}\label{int-cos}
\beta_j < \alpha_j < \beta_{j+1} 
\end{equation}
\begin{equation}\label{distance}
\min_{1\le j \le m(k)} \{\alpha_j-\beta_j,\beta_{j+1} - \alpha_j\}
\ge \frac{12 \pi}{k(k+12)}
\end{equation}
and
\begin{equation}\label{alpha-distance}
\alpha_{j+1} - \alpha_j = \frac{2 \pi}{k}\ \ \text{ for } \ 1 \le j \le m(k)-1.
\end{equation}
Hence we have,
\begin{equation}\label{cos-product-1}
\cos \left(\frac{k}{2} \left( \alpha_j - \frac{6 \pi}{k(k+12)} \right) \right)
\cos \left(\frac{k}{2} \left( \alpha_j + \frac{6 \pi}{k(k+12)} \right) \right)<0
\end{equation}
and
\begin{equation}\label{cos-product-2}
\cos \left(\frac{k}{2} \left( \alpha_j - \frac{\pi}{3k}\right) \right)
\cos \left(\frac{k}{2} \left( \alpha_j + \frac{\pi}{3k}\right) \right)<0.
\end{equation}

\begin{rmk}\label{k-cond} \rm
From \eqref{alpha_j} and \eqref{beta_j} it follows that $k \ge 24$,
whenever $\alpha_j+ \frac{6 \pi}{k(k+12)} > \frac{23\pi}{36}$
or $\alpha_j \ge \frac{19\pi}{32}+ \frac{\pi}{3k}$
or $\beta_j+ \frac{6 \pi}{(k+12)(k+24)} > \frac{23\pi}{36}$.
\end{rmk}

We now associate the zeros of $G_{k}(\theta)$ with the zeros of $\cos(k\theta/2)$.

\begin{lem}\label{l1}
Let $k \ge 12$ and $\alpha_1^*< \cdots < \alpha_{m(k)}^*$
be the zeros of $G_{k}(\theta)$ in $(\pi/2, 2\pi/3)$. Then
$$
\alpha_j^* \in \left( \alpha_j - \frac{6 \pi}{k(k+12)},
\alpha_j + \frac{6 \pi}{k(k+12)} \right)
$$
for all $j$ such that $\alpha_j+ \frac{6 \pi}{k(k+12)} \le \frac{23\pi}{36}$ and
$$
\alpha_j^* \in \left( \alpha_j - \frac{\pi}{3k},
\alpha_j + \frac{\pi}{3k} \right)
$$
for all $j$ such that $\alpha_j \ge 19\pi/32 + \pi/3k$. 
In particular, $\alpha_j^* \in \left( \alpha_j - \frac{\pi}{3k},
\alpha_j + \frac{\pi}{3k} \right)$ for $1\le j \le m(k)$.
\end{lem}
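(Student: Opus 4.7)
The strategy is to localize each zero of $G_k$ by applying the intermediate value theorem to the decomposition $G_k(\theta)=2\cos(k\theta/2)+S_k(\theta)$, and then invoke \thmref{Zeros} to conclude that each localization interval contains exactly one zero $\alpha_j^*$.

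For Part 1, I would evaluate $G_k$ at the two endpoints $\theta=\alpha_j\pm\frac{6\pi}{k(k+12)}$. At these points $|2\cos(k\theta/2)|=2\sin\!\left(\frac{3\pi}{k+12}\right)$, and by \eqref{cos-product-1} the two values have opposite signs. The hypothesis $\alpha_j+\frac{6\pi}{k(k+12)}\le\frac{23\pi}{36}$ forces $\theta\le\frac{23\pi}{36}$ at both endpoints, so $2\cos(\theta/2)\ge 2\cos(23\pi/72)$. Writing $S_k(\theta)=(2\cos(\theta/2))^{-k}+Q_k(\theta)$ and invoking \eqref{our-bound}, I would verify the numerical inequality
$$
(2\cos(23\pi/72))^{-k}+21.359\cdot 2^{-k/2}<2\sin\!\left(\frac{3\pi}{k+12}\right)
$$
for every $k\ge 12$. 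This yields $|S_k(\theta)|<|2\cos(k\theta/2)|$ at each endpoint, so $G_k$ inherits the sign of $2\cos(k\theta/2)$, and IVT produces a zero of $G_k$ in the interval.

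For Part 2, \rmkref{k-cond} forces $k\ge 24$ and ensures that $[\alpha_j-\pi/3k,\alpha_j+\pi/3k]$ lies inside $[19\pi/32,2\pi/3-2\pi/3k]$, on which \eqref{S_k-bound} gives $0<S_k(\theta)<1$. At the endpoints one has $|2\cos(k\theta/2)|=1$ with opposite signs by \eqref{cos-product-2}; since $0<S_k<1$ strictly, the value $G_k=\pm 1+S_k$ retains the sign of $\pm 1$ at each endpoint, and IVT once again produces a zero.

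Uniqueness and the final ``in particular'' statement follow by counting. The relation \eqref{alpha-distance} shows that the intervals around distinct $\alpha_j$'s are disjoint (their radii are strictly less than $\pi/k$), while \thmref{Zeros} gives exactly $m(k)$ zeros of $G_k$ in $(\pi/2,2\pi/3)$, so each interval contains precisely one $\alpha_j^*$. A short case check (only $\alpha_1=7\pi/12$ arises for $k=12$ and it is covered by Part 1; for $k\ge 14$ the Part 1 and Part 2 ranges overlap) shows every $\alpha_j$ is covered by at least one of the two parts, and the inequality $\frac{6\pi}{k(k+12)}<\frac{\pi}{3k}$ (valid for $k\ge 12$) puts Part 1's interval inside Part 2's, giving the last assertion. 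The main obstacle is the Part 1 numerical inequality at $k=12$, where the two sides are approximately $0.756$ and $0.765$; for larger $k$ both terms on the left decay exponentially, so the inequality becomes comfortable.
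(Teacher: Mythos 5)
Your proposal is correct and follows essentially the same route as the paper: the same endpoint evaluation of $G_k=2\cos(k\theta/2)+S_k(\theta)$ at $\alpha_j\pm\frac{6\pi}{k(k+12)}$ and $\alpha_j\pm\frac{\pi}{3k}$, the same sign comparison via \eqref{cos-product-1}, \eqref{cos-product-2} and \eqref{S_k-bound}, and the same disjointness-plus-counting argument; even your tight $k=12$ numerics match the paper's bound $\sqrt[k]{(k+12)/10.355}<1.073<1.074<2\cos(23\pi/72)$, which is just the paper's cleaner way of packaging your ``verify for every $k\ge 12$'' step into a single monotonicity check.
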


\begin{proof}[\bf Proof of \lemref{l1}]
Observe that
$$
\left| 2 \cos \left(\frac{k}{2} \left( \alpha_j - \frac{6 \pi}{k(k+12)}
\right) \right) \right|
=\left| 2 \cos \left(\frac{k}{2} \left( \alpha_j + \frac{6 \pi}{k(k+12)}
\right) \right) \right|
=2\sin\left( \frac{3 \pi}{k+12}\right).
$$
We first claim that $2\sin\left( \frac{3 \pi}{k+12}\right) > |S_k(\theta)|$
for $\theta \in (\pi/2, 23\pi/36]$. Note that
$$
2\sin\left( \frac{3 \pi}{k+12}\right)
> 2\left( \frac{3 \pi}{k+12} - \frac{1}{6} \left( \frac{3 \pi}{k+12}\right)^3\right)
> \frac{18.365}{k+12}.
$$
Hence it is enough to show that
\begin{align*}
\frac{18.365}{k+12} - |S_k(\theta)|
&= \frac{18.365}{k+12} - \left(\frac{1}{2\cos(\theta/2)}\right)^k - |Q_k(\theta)|\\
&> \frac{18.365}{k+12} - \left(\frac{1}{2\cos(\theta/2)}\right)^k -
\frac{21.359}{2^{k/2}}>0.
\end{align*}
Now this is true if
$$
2^{k/2} \left(18.365 (2\cos(\theta/2))^k - (k+12) -
\frac{21.359(k+12)}{2^{k/2}} (2\cos(\theta/2))^k \right) >0,
$$
In particular if
$$
10.355 (2\cos(\theta/2))^k - (k+12) >0.
$$
The quantity $\sqrt[k]{\frac{k+12}{10.355}}$ is maximum for $k=12$,
and therefore $\sqrt[k]{\frac{k+12}{10.355}} < 1.073$.
Note that for $\theta \in (\pi/2, 23\pi/36]$, $2\cos(\theta/2) > 1.074$.
This proves our first claim.

Since
$$
\left| 2 \cos \left(\frac{k}{2} \left( \alpha_j \pm \frac{6 \pi}{k(k+12)}
\right) \right) \right| > |S_k(\theta)|
$$
for $\theta \in (\pi/2, 23\pi/36]$, we get that
$$
G_k\left( \alpha_j \pm \frac{6 \pi}{k(k+12)} \right)
= 2 \cos \left(\frac{k}{2} \left( \alpha_j \pm \frac{6 \pi}{k(k+12)}
\right) \right) + S_k\left( \alpha_j \pm \frac{6 \pi}{k(k+12)} \right)
$$
have same sign as $2 \cos \left(\frac{k}{2} \left( \alpha_j \pm \frac{6 \pi}{k(k+12)}
\right) \right)$. Then from \eqref{cos-product-1} we get that
$$
G_k\left( \alpha_j - \frac{6 \pi}{k(k+12)} \right)
G_k\left( \alpha_j + \frac{6 \pi}{k(k+12)} \right)<0.
$$
This implies that there exists a zero of $G_{k}(\theta)$ in the interval
$\left( \alpha_j - \frac{6 \pi}{k(k+12)},\alpha_j + \frac{6 \pi}{k(k+12)} \right)$
for all $j$ such that $\alpha_j+ \frac{6 \pi}{k(k+12)} \le \frac{23\pi}{36}$.

It is easy to see from \eqref{alpha_j} that $\alpha_{m(k)} \le \frac{2\pi}{3}-\frac{\pi}{k}$.
Hence we get $\alpha_{m(k)}+\frac{\pi}{3k} \le \frac{2\pi}{3}-\frac{2\pi}{3k}$.
Since $\cos(k \alpha_j/2)=0$, we have
$$
\left| 2 \cos \left(\frac{k}{2} \left( \alpha_j - \frac{\pi}{3k}
\right) \right) \right|
=\left| 2 \cos \left(\frac{k}{2} \left( \alpha_j + \frac{\pi}{3k}
\right) \right) \right|=1.
$$
As $G_k(\theta) = 2 \cos(k\theta/2) + S_k(\theta)$, by 
\eqref{S_k-bound} we get that 
the sign of $G_k\left( \alpha_j \pm \frac{\pi}{3k}\right)$
is same as that of
$2 \cos \left(\frac{k}{2} \left( \alpha_j \pm \frac{\pi}{3k} \right) \right)$,
whenever $\alpha_{j} \ge \frac{19\pi}{32} + \frac{\pi}{3k}$.
Hence it follows from \eqref{cos-product-2} that
$$
G_k \left( \alpha_j - \frac{\pi}{3k}\right)
G_k \left( \alpha_j + \frac{\pi}{3k}\right)<0.
$$
This implies that there exists a zero of $G_{k}(\theta)$ in the interval
$\left( \alpha_j - \frac{\pi}{3k},\alpha_j + \frac{\pi}{3k} \right)$
for all $j$ such that $\alpha_j \ge \frac{19\pi}{32} + \frac{\pi}{3k}$.

From \rmkref{k-cond} we deduce that a zero $\alpha_j$ of
$\cos(k\theta/2)$ in $(\pi/2,2\pi/3)$ satisfies
$\alpha_j+ \frac{6 \pi}{k(k+12)} \le \frac{23\pi}{36}$
or $\alpha_j \ge \frac{19\pi}{32} + \frac{\pi}{3k}$. Moreover,
$$
\left( \alpha_j - \frac{6 \pi}{k(k+12)},\alpha_j + \frac{6 \pi}{k(k+12)} \right)
\subset \left( \alpha_j - \frac{\pi}{3k},\alpha_j + \frac{\pi}{3k} \right).
$$
By \eqref{alpha-distance} all the intervals of the form
$\left( \alpha_j - \frac{\pi}{3k},\alpha_j + \frac{\pi}{3k} \right)$
are disjoint for $1 \le j \le m(k)$. We have shown above that
each of them contains at least one zero of $G_{k}(\theta)$. We also know that
$G_{k}(\theta)$ has exactly $m(k)$ zeros in $(\pi/2,2\pi/3)$. Since
the zeros of $\cos(k\theta/2)$ and $G_{k}(\theta)$ have been labelled 
as per the increasing order of magnitude, the assertion of the lemma follows.
\end{proof}

\section{\bf Proof of \thmref{interlacing}}

We closely follow the arguments of Nozaki \cite{HN}.
However at  several places our arguments are simpler.
We know that a zero $\alpha_j$ of
$\cos(k\theta/2)$ in $(\pi/2,2\pi/3)$ satisfies
$\alpha_j+ \frac{6 \pi}{k(k+12)} \le \frac{23\pi}{36}$
or $\alpha_j \ge \frac{19\pi}{32} + \frac{\pi}{3k}$. Hence we
prove \thmref{interlacing} for these two cases.

\subsection{Case I}
Let $\alpha_j \in \left( \frac{\pi}{2}, \frac{23\pi}{36}-\frac{6 \pi}{k(k+12)} \right]$.
Then we prove the following:
\begin{itemize}
\item[(i)] $\beta_j^* < \alpha_j^*$.
\item[(ii)] $\alpha_j^* < \beta_{j+1}^*$ if
$\beta_{j+1}+ \frac{6 \pi}{(k+12)(k+24)} \le \frac{23\pi}{36}$.
\end{itemize}

\subsubsection{\bf Proof of (i)}
Since $\alpha_j+ \frac{6 \pi}{k(k+12)} \le \frac{23\pi}{36}$,
we have $\beta_j+ \frac{6 \pi}{(k+12)(k+24)} \le \frac{23\pi}{36}$.
Applying \lemref{l1} for $G_k(\theta)$ and $G_{k+12}(\theta)$, we get
$$
\alpha_j^* \in \left( \alpha_j - \frac{6 \pi}{k(k+12)},
\alpha_j + \frac{6 \pi}{k(k+12)} \right) \ \text{and} \
\beta_j^* \in \left( \beta_j - \frac{6 \pi}{(k+12)(k+24)},
\beta_j + \frac{6 \pi}{(k+12)(k+24)} \right).
$$

Therefore by \eqref{distance}, we have
$$
\beta_j^*< \beta_j + \frac{6 \pi}{(k+12)(k+24)}
< \beta_j + \frac{6 \pi}{k(k+12)}
\le \alpha_j - \frac{6 \pi}{k(k+12)} < \alpha_j^*.
$$
\subsubsection{\bf Proof of (ii)}
When $\beta_{j+1}+ \frac{6 \pi}{(k+12)(k+24)} \le \frac{23\pi}{36}$, by \lemref{l1}
$$
\beta_{j+1}^* \in \left( \beta_{j+1} - \frac{6 \pi}{(k+12)(k+24)},
\beta_{j+1} + \frac{6 \pi}{(k+12)(k+24)} \right).
$$
Therefore by \eqref{distance}, we have
$$
\alpha_j^*< \alpha_j + \frac{6 \pi}{k(k+12)}
\le \beta_{j+1} - \frac{6 \pi}{k(k+12)}
< \beta_{j+1} - \frac{6 \pi}{(k+12)(k+24)} < \beta_{j+1}^*.
$$

\subsection{Case II} Let
$\alpha_j \in \left[ \frac{19\pi}{32} + \frac{\pi}{3k},\frac{2\pi}{3} \right)$.
Then we prove the following:
\begin{itemize}
\item[(iii)] $\beta_j^* < \alpha_j^*$ if
$\beta_{j} \ge \frac{19\pi}{32} + \frac{\pi}{3(k+12)}$.
\item[(iv)] $\alpha_j^* < \beta_{j+1}^*$.
\end{itemize}
For an integer $n$,
$$
\cos(n \pi /2)
= \begin{cases}
1 & \text{if } n \equiv 0 \bmod 4,\\
-1 & \text{if } n \equiv 2 \bmod 4,\\
0 & \text{otherwise} .
\end{cases}
$$
Further $\cos(n \theta)$ is decreasing at $\pi/2$ if $n \equiv 1 \bmod 4$ and
increasing at $\pi/2$ if $n \equiv 3 \bmod 4$. Thus if
$\alpha$ denotes the first zero of $\cos(n \theta)$
in $(\pi/2,\infty)$, then $\cos(n \theta)$ is
decreasing at $\alpha$ if $n \equiv 0,3 \bmod 4$ and
increasing at $\alpha$ if $n \equiv 1,2 \bmod 4$. Hence if
$\beta$ denotes the first zero of $\cos((n+6) \theta)$
in $(\pi/2,\infty)$, then $\cos((n+6) \theta)$ is
increasing at $\beta$ if $n \equiv 0,3 \bmod 4$ and
decreasing at $\beta$ if $n \equiv 1,2 \bmod 4$.

Therefore, for all $1 \le j \le m(k)$, if $\cos(k \theta/2)$
is increasing (resp. decreasing) at $\alpha_j$,
then $\cos((k+12) \theta/2)$
is decreasing (resp. increasing) at $\beta_j$.
We consider two subcases.

\begin{itemize}

\item[(a)] The function $\cos(k\theta/2)$ is increasing at $\alpha_j$.
\item[(b)] The function $\cos(k\theta/2)$ is decreasing at $\alpha_j$.
\end{itemize}
Note that by \rmkref{k-cond}, we have $k \ge 24$. Now for $k \ge 24$ and
$\theta \in [19\pi/32,2\pi/3-2\pi/3k]$, $S_k(\theta)>0$ (see \eqref{S_k-bound}).
Hence $\alpha_j^* \lessgtr \alpha_j$ according as
$\cos(k\theta/2)$ is increasing or decreasing at $\alpha_j$.

\subsubsection{\bf Proof of (iii) and (iv) when (a) occurs}
It follows by our earlier analysis that $\cos((k+12)\theta/2)$ is increasing at $\beta_{j+1}$.
Also $\beta_{j+1} > \alpha_j > \frac{19\pi}{32} + \frac{\pi}{3(k+12)}$. Hence
\begin{equation}\label{101}
\alpha_j^* < \alpha_j \ \ \text{and} \ \ \beta_{j+1}^*< \beta_{j+1}.
\end{equation}
Further
$$
\beta_j< \beta_j^* \ \ \text{if} \ \ \beta_{j} \ge \frac{19\pi}{32} + \frac{\pi}{3(k+12)}.
$$
The Case (a) is described pictorially below.
\begin{figure}[H]
\begin{center}
 \begin{minipage}[b]{0.4\textwidth}
  \includegraphics[width=5cm]{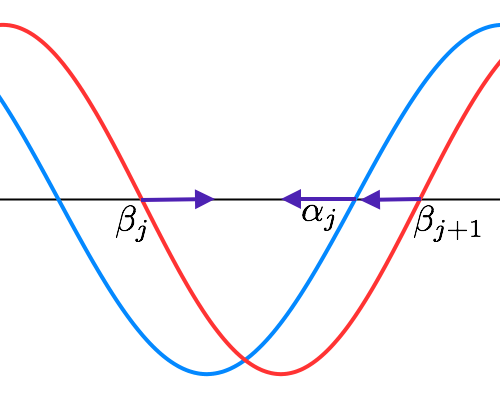}
  \label{p2}
   \end{minipage}
   \end{center}
\end{figure}
In the above one and other picture below, the respective arrows indicate
the direction in which $\beta_j^*, \alpha_j^*$ and $\beta_{j+1}^*$ lie.
We first prove (iii). In fact,
we show that the intersection point of the two cosine curves
between $\beta_j$ and $\alpha_j$ separates $\beta_j^*$ and $\alpha_j^*$.
The function
$$
2\cos((k+12)\theta/2)-2\cos(k\theta/2)=-4\sin((k+6)\theta/2) \sin(3\theta),
$$
has a zero between $\beta_j$ and $\alpha_j$, say
$\gamma_j$. For $1 \le j \le m(k)$,
\begin{equation}\label{gamma_j}
\gamma_j=
\begin{cases}
(\frac{1}{2} + \frac{2j-1}{k+6})\pi & \ \text{if} \ k \equiv 0 \bmod 4,\\
(\frac{1}{2} + \frac{2j}{k+6})\pi & \ \text{if} \ k \equiv 2 \bmod 4.
\end{cases}
\end{equation}
We prove that $\beta_j^* \in (\beta_j,\gamma_j)$ and
$\alpha_j^* \in (\gamma_j,\alpha_j)$. By \eqref{S_k-bound},
$G_k(\alpha_j)>0$ and $G_{k+12}(\beta_j)>0$. Thus, we are led
to show that $G_k(\gamma_j),G_{k+12}(\gamma_j)<0$. As in the proof of
\cite[Lemma 4.2]{HN}, it follows that
$$
|2\cos(k\gamma_j/2)|>S_k(\theta)
\ \ \text{and} \ \
|2\cos((k+12)\gamma_j/2)|>S_{k+12}(\theta),
$$
for any $j$ such that $\gamma_j \ge \frac{19\pi}{32}$
and $\theta \in [19\pi/32,2\pi/3)$.

Since $\cos(k\theta/2)$ is increasing
at $\alpha_j$ and $\cos(k\alpha_j/2)=0$, we get $\cos(k\gamma_j/2)<0$.
Hence $G_k(\gamma_j)<0$. Similarly one obtains that $G_{k+12}(\gamma_j)<0$.
This completes the proof of (iii) when (a) occurs.

Next we show (iv) i.e. $\alpha_j^* <  \beta_{j+1}^*$. Now if $\alpha_j \le \beta_{j+1} - \frac{\pi}{3(k+12)}$,
then by \eqref{101} and \lemref{l1},
$$
\alpha_j^* < \alpha_j \le \beta_{j+1} - \frac{\pi}{3(k+12)} <\beta_{j+1}^*,
$$
which proves (iv). Hence we need to consider only
$$
\beta_{j+1}-\frac{\pi}{3(k+12)} < \alpha_j  \ \ \text{and also} \ \ 
\alpha_j^*, \beta_{j+1}^* \in \left(\beta_{j+1}-\frac{\pi}{3(k+12)}, \alpha_j\right).
$$
From \eqref{S_k-ineq} we have, $S_k(\theta) > S_{k+12}(\theta)$ for
$\theta \in [\beta_{j+1}-\pi/3(k+12), \alpha_j]$. Further,
the function $\cos(k \theta /2)-\cos((k+12)\theta /2)$ takes positive value
in the interval $(\gamma_j,\gamma_{j+1})$, where $\gamma_j$'s are as in
\eqref{gamma_j}. It is easy to check that
$[\beta_{j+1}-\pi/3(k+12), \alpha_j] \subset (\gamma_j,\gamma_{j+1})$.
Therefore, we obtain that
$$
G_k(\theta) - G_{k+12}(\theta)
= 2\cos(k \theta /2)- 2\cos((k+12)\theta /2)
+ S_k(\theta) - S_{k+12}(\theta) > 0
$$
for $\theta \in [\beta_{j+1}-\pi/3(k+12), \alpha_j]$.
Taking $\theta=\alpha_j^*$, we find that $G_{k+12}(\alpha_j^*)<0$.
On the other hand, $G_{k+12}(\beta_{j+1})>0$. Hence
$\alpha_j^* <  \beta_{j+1}^*$. This completes the proof of Case (a).

\subsubsection{\bf Proof of (iii) and (iv) when (b) occurs} In this case 
$\cos((k+12)\theta/2)$ is decreasing at $\beta_{j+1}$. Hence
\begin{equation}\label{103}
\alpha_j^* < \alpha_j \ \ \text{and} \ \ \beta_{j+1}^*< \beta_{j+1}.
\end{equation}
Further
\begin{equation}\label{104}
\beta_j< \beta_j^* \ \ \text{if} \ \ \beta_{j} \ge \frac{19\pi}{32} + \frac{\pi}{3(k+12)}.
\end{equation}

\begin{figure}[H]
\begin{center}
 \begin{minipage}[b]{0.4\textwidth}
  \includegraphics[width=5cm]{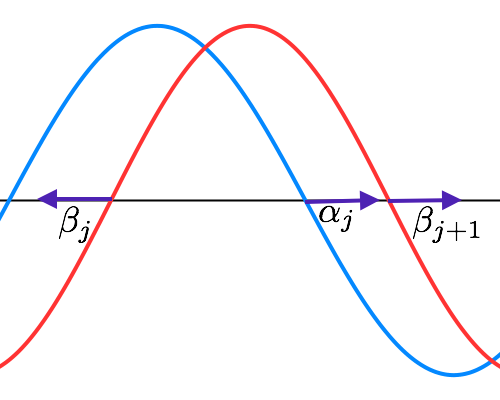}
  \label{p3}
   \end{minipage}
   \end{center}
\end{figure}
From \eqref{103} and \eqref{104} we easily get
$$
\beta_j^* < \alpha_j^* \ \ \text{if} \ \ \beta_{j} \ge \frac{19\pi}{32} + \frac{\pi}{3(k+12)},
$$
which proves (iii). Thus we proceed to prove (iv) i.e.
$\alpha_j^* <  \beta_{j+1}^*$. If $\alpha_j + \frac{\pi}{3k} \le \beta_{j+1}$,
then by \lemref{l1} and \eqref{103}, we have
$$
\alpha_j^* < \alpha_{j}+\frac{\pi}{3k} \le \beta_{j+1} < \beta_{j+1}^*.
$$
Hence we can assume that
$$
\beta_{j+1} < \alpha_j+\frac{\pi}{3k} \ \ \text{and also} \ \ 
\alpha_j^*, \beta_{j+1}^* \in \left(\beta_{j+1}, \alpha_j+\frac{\pi}{3k}\right).
$$
Note that $\beta_{j+1} < \alpha_j+\frac{\pi}{3k}$ implies
\begin{equation}\label{j-cond}
j > \begin{cases}
     (5k+24)/72 & \ \text{if} \ k \equiv 0 \bmod 4\\
     (5k-12)/72 & \ \text{if} \ k \equiv 2 \bmod 4.
    \end{cases}
\end{equation}

Here the aim is to show that
\begin{equation}\label{G-ineq}
G_{k+12}(\theta)>G_k(\theta) \ \ \text{for} \ \ \theta \in [\beta_{j+1}, \alpha_j+\pi/3k].
\end{equation}
If \eqref{G-ineq} holds, then $G_k(\beta_{j+1}^*)<0$. Since
$G_k(\alpha_j)>0$, we get $\alpha_j < \alpha_{j}^* < \beta_{j+1}^*$
as required. Thus it remains to prove \eqref{G-ineq}.
Note that
\begin{align*}
G_{k+12}(\theta) - G_k(\theta)=
& 2\cos((k+12)\theta /2) - 2\cos(k \theta /2)\\
&+ (2\cos(\theta /2))^{-(k+12)}-(2\cos(\theta /2))^{-k}
+ Q_{k+12}(\theta) - Q_k(\theta).
\end{align*}
Let $r(\theta):=2\cos((k+12)\theta /2) - 2\cos(k \theta /2)$.
We first find a lower bound for $r(\theta)$ in $[\beta_{j+1}, \alpha_j+\pi/3k]$.
For this we show that $r(\theta)$ is decreasing in this interval.
Note that $r(\theta)= -4\sin((k+6)\theta/2) \sin(3\theta)$.
Now $-\sin(3\theta)$ is decreasing from $1$ to $0$ in 
$(\pi/2,2\pi/3)$. Also $r(\theta)>0$ for $\theta \in (\gamma_j,\gamma_{j+1})$.
Thus $\sin((k+6)\theta/2)>0$ for $\theta \in (\gamma_j,\gamma_{j+1})$.
As $\gamma_j$ and $\gamma_{j+1}$ are two consecutive zeros of
$\sin((k+6)\theta/2)$, we therefore obtain that $\sin((k+6)\theta/2)$ is
decreasing in $\left(\frac{\gamma_j+\gamma_{j+1}}{2},\gamma_{j+1}\right)$.
From \eqref{alpha_j}, \eqref{beta_j} and \eqref{gamma_j}, we deduce that
$$
\frac{\gamma_j+\gamma_{j+1}}{2}<\beta_{j+1}
\ \ \text{and} \ \
\alpha_j+\frac{\pi}{3k} < \gamma_{j+1}.
$$
Hence $\sin((k+6)\theta/2)$ is decreasing in
$[\beta_{j+1}, \alpha_j+\pi/3k]$. As both the functions
$\sin((k+6)\theta/2)$ and $-\sin(3\theta)$ are positive and decreasing,
we get that $r(\theta)$ is decreasing in 
$[\beta_{j+1}, \alpha_j+\pi/3k]$.
Therefore $r(\theta)$ is minimised at $\alpha_j+\pi/3k$ for
$\theta \in [\beta_{j+1}, \alpha_j+\pi/3k]$.
Note that $2\cos\left(\frac{k}{2}\left(\alpha_j+\frac{\pi}{3k}\right)\right)=-1$.
Now
$$
\alpha_j+\frac{\pi}{3k} = \begin{cases}
     \beta_{j+1}+ \frac{72j-5k-24}{3k(k+12)}\pi & \ \text{if} \ k \equiv 0 \bmod 4,\\
     \beta_{j+1}+ \frac{72j-5k+12}{3k(k+12)}\pi & \ \text{if} \ k \equiv 2 \bmod 4.
    \end{cases}
$$
Hence
$$
\left| 2\cos\left(\frac{k+12}{2}\left(\alpha_j+\frac{\pi}{3k}\right)\right) \right| = \begin{cases}
     \left| 2\sin\left(\frac{72j-5k-24}{6k}\pi\right)\right| & \ \text{if} \ k \equiv 0 \bmod 4,\\
     \left| 2\sin\left(\frac{72j-5k+12}{6k}\pi\right)\right| & \ \text{if} \ k \equiv 2 \bmod 4.
    \end{cases}
$$
From \eqref{j-cond} and the condition $j \le (k-s)/12$, we get that
if $k \equiv 0 \bmod 4$,
$$
0<\frac{72j-5k-24}{6k}\le\frac{k-6s-24}{6k}\le\frac{1}{6}-\frac{4}{k}
$$
and if $k \equiv 2 \bmod 4$,
$$
0<\frac{72j-5k+12}{6k}\le\frac{k-6s+12}{6k}\le\frac{1}{6}-\frac{4}{k}.
$$
Therefore,
$$
\left| 2\cos\left(\frac{k+12}{2}\left(\alpha_j+\frac{\pi}{3k}\right)\right) \right|
\le 2\sin\left(\frac{\pi}{6}-\frac{4\pi}{k}\right).
$$
The equation of the tangent line of the sine curve at $\pi/6$ is
$$
y=\frac{1}{2} + \frac{\sqrt 3}{2} \left(x-\frac{\pi}{6}\right).
$$
Hence $2\sin\left(\frac{\pi}{6}-\frac{4\pi}{k}\right) < 1-\frac{4 \sqrt 3 \pi}{k}$.

Next let $t(\theta):=(2\cos(\theta /2))^{-(k+12)}-(2\cos(\theta /2))^{-k}$. Then
$$
t'(\theta)=\sin(\theta/2)(2\cos(\theta /2))^{-(k+1)}
\left( (k+12)(2\cos(\theta /2))^{-12}-k \right).
$$
So $t(\theta)$ is minimised when $(2\cos(\theta /2))^{-12}=\frac{k}{k+12}$.
We thus get,
\begin{align*}
G_{k+12}(\theta) - G_k(\theta)
& > 1 - \left(1-\frac{4 \sqrt 3 \pi}{k}\right)
+ \left(\frac{k}{k+12}\right)^{k/12} \left(\frac{k}{k+12}-1\right)
- 42.718 \left(\frac{1}{2}\right)^{k/2}\\
& > \frac{4 \sqrt 3 \pi}{k} - \frac{12}{k+12} - 42.718 \left(\frac{1}{2}\right)^{k/2}\\
& = \left(\frac{12}{k} -\frac{12}{k+12} \right) + \frac{4 \sqrt 3 \pi-12}{k} - 42.718 \left(\frac{1}{2}\right)^{k/2}
> 0.
\end{align*}
This completes the proof of Case (b). 

\subsection{Remaining cases}
By Cases I and II, in order to complete the proof of \thmref{interlacing},
we need to prove the following two cases.
\begin{itemize}
\item[(v)] $\alpha_j^* < \beta_{j+1}^*$ when
$\alpha_j \in \left( \frac{\pi}{2}, \frac{23\pi}{36}-\frac{6 \pi}{k(k+12)} \right]$ and
$\beta_{j+1}+ \frac{6 \pi}{(k+12)(k+24)} > \frac{23\pi}{36}$.
\item[(vi)] $\beta_{j}^* < \alpha_j^*$ when
$\alpha_j \in \left[ \frac{19\pi}{32} + \frac{\pi}{3k},\frac{2\pi}{3} \right)$ and
$\beta_{j} < \frac{19\pi}{32} + \frac{\pi}{3(k+12)}$.
\end{itemize}
\subsubsection{\bf Proof of (v)}
We show that  $\alpha_j \ge \frac{19\pi}{32} + \frac{\pi}{3k}$. Hence by \S4.2,
$\alpha_j^* < \beta_{j+1}^*$. Suppose that $\alpha_j < \frac{19\pi}{32} + \frac{\pi}{3k}$.
Then using \eqref{alpha_j} we get that
\begin{equation}\label{1st-cond}
27k>
\begin{cases}
576j - 384 & \text{if} \ k \equiv 0 \bmod 4,\\
576j - 96 & \text{if} \ k \equiv 2 \bmod 4.
\end{cases}
\end{equation}
Since $\beta_{j+1}+ \frac{6 \pi}{(k+12)(k+24)} > \frac{23\pi}{36}$,
it follows from \rmkref{k-cond} and \eqref{beta_j} that $k \ge 24$ and
\begin{equation}\label{2nd-cond}
0<
\begin{cases}
(k+24)(72j-5k-24) + 216 & \text{if} \ k \equiv 0 \bmod 4,\\
(k+24)(72j-5k+12) + 216 & \text{if} \ k \equiv 2 \bmod 4
\end{cases}
\end{equation}
respectively. Using \eqref{1st-cond} in \eqref{2nd-cond} we get that
$$
0<
\begin{cases}
(k+24)(-13k+192) + 1728 & \text{if} \ k \equiv 0 \bmod 4,\\
(k+24)(-13k) + 1728 & \text{if} \ k \equiv 2 \bmod 4.
\end{cases}
$$
This is a contradiction since $k \ge 24$. 

\subsubsection{\bf Proof of (vi)}
As in (v), we can deduce that
$\alpha_j+ \frac{6 \pi}{k(k+12)} \le \frac{23\pi}{36}$.
Hence $\beta_j^* < \alpha_j^*$ by \S4.1. \qed

\medskip

\noindent {\bf Acknowledgement:}
We would like to thank Biswajyoti Saha and Jhansi Bhavani V. for helping us with 
the computations in \lemref{Eisenstein}.

\end{document}